\numberwithin{equation}{subsection}
\theoremstyle{plain}
\newtheorem{thm}[subsection]{Theorem}
\newtheorem{prop}[subsection]{Proposition}
\newtheorem{cor}[subsection]{Corollary}
\newtheorem{conj}[subsection]{Conjecture}
\theoremstyle{definition}
\theoremstyle{remark}
\newtheorem{rem}[subsection]{Remark}
\newtheorem{remss}[subsubsection]{Remark}
\DeclareSymbolFont{cyrletters}{OT2}{wncyr}{m}{n}
\DeclareMathSymbol{\sha}{\mathalpha}{cyrletters}{"58}
\newcommand{\BB}{\mathcal{B}}
\newcommand{\CC}{\mathcal{C}}
\newcommand{\DD}{\mathcal{D}}
\renewcommand{\SS}{\mathcal{S}}
\newcommand{\XX}{\mathcal{X}}
\newcommand{\YY}{\mathcal{Y}}
\newcommand{\ZZ}{{\mathcal{Z}}}
\newcommand{\WW}{{\mathcal{W}}}
\renewcommand{\O}{\mathcal{O}}
\newcommand{\EE}{\mathcal{E}}
\newcommand{\MM}{\mathcal{M}}
\newcommand{\OO}{\mathcal{O}}
\newcommand{\ratto}{{\dashrightarrow}}
\newcommand{\C}{\mathbb{C}}
\newcommand{\A}{\mathbb{A}}
\renewcommand{\P}{\mathbb{P}}
\newcommand{\tensor}{\otimes}
\newcommand{\PGL}{\mathrm{PGL}}
\DeclareMathOperator{\ord}{ord}
\DeclareMathOperator{\Hom}{Hom}
\DeclareMathOperator{\Spec}{Spec}
\DeclareMathOperator{\Proj}{Proj}
\def\clap#1{\hbox to 0pt{\hss#1\hss}}
\begin{document}
\title{Rational curves on elliptic surfaces}

\author{Douglas Ulmer}
\address{School of Mathematics \\ Georgia Institute of Technology
  \\ Atlanta, GA 30332}
\email{ulmer@math.gatech.edu}

\date{\today}

\subjclass[2010]{Primary 14J27; Secondary 14G99, 11G99}

\begin{abstract}
  We prove that a very general elliptic surface $\EE\to\P^1$ over the
  complex numbers with a section and with geometric genus $p_g\ge2$
  contains no rational curves other than the section and components of
  singular fibers.  Equivalently, if $E/\C(t)$ is a very general
  elliptic curve of height $d\ge3$ and if $L$ is a finite extension of
  $\C(t)$ with $L\cong\C(u)$, then the Mordell-Weil group $E(L)=0$.
\end{abstract}

\maketitle

\section{Introduction}
Fix a field $k$ and consider elliptic curves $E$ over $K=k(t)$.  When
$k$ is a finite field, we showed in \cite{Ulmer07b} that there are
often finite extensions $L$ of $K$ which are themselves rational
function fields (i.e., $L\cong k(u)$) such that the rank of $E(L)$ is
as large as desired.  Indeed, under a mild parity hypothesis on the
conductor of $E$ (which should hold roughly speaking in one half of
all cases), we may take extensions of the form $L=k(t^{1/d})$ with $d$
varying through integers prime to $p$.  More generally, for any
elliptic curve $E$ over $K$ with $j(E)\not\in k$ there is a finite
extension of $K$ of the form $k'(u)$ with $k'$ a finite extension of
$k$ such that $E$ obtains unbounded rank in the layers of the tower
$k'(u^{1/d})$.  Our aim in this note is to show that the situation is
completely different when $k$ is the field of complex numbers.

From now on we take $k=\C$.  If $E$ is an elliptic curve over $\C(t)$,
the {\it height\/} of $E$ is the smallest non-negative integer $d$
such that $E$ has a Weierstrass equation
$$y^2=x^3+a(t)x+b(t)$$
where $a(t)$ and $b(t)$ are polynomials of degree $\le4d$ and $\le6d$
respectively.  Our results concern elliptic curves of height $d\ge3$.

\begin{thm}\label{thm:main}
  A very general elliptic curve $E$ over $\C(t)$ of height $d\ge3$ has
  the following property: For every finite rational extension
  $L\cong\C(u)$ of $\ \C(t)$, the Mordell-Weil group $E(L)=0$.
\end{thm}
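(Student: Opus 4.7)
The plan is first to translate the theorem into a statement about rational curves on the elliptic surface $\EE\to\P^1_t$ attached to $E$. A nonzero point $P\in E(L)$, with $L\cong\C(u)$ a degree-$n$ rational extension of $\C(t)$, gives a section of the base-changed elliptic surface $\EE'=\EE\times_{\P^1_t}\P^1_u$; composing this section with the projection to $\EE$ produces an irreducible rational curve $C\subset\EE$ mapping dominantly to $\P^1_t$ of degree $n$. Conversely, any irreducible rational curve $C\subset\EE$ mapping dominantly to $\P^1_t$ other than the zero section gives, via its normalization $\P^1_u\to C\subset\EE$, a nonzero element of $E(\C(u))$. Thus the theorem is equivalent to the assertion that a very general $\EE\to\P^1$ of height $d\ge 3$ carries no irreducible rational curves besides the zero section and components of singular fibers.

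The coefficient space $\mathcal{A}_d\cong\C^{10d+2}$ of Weierstrass equations serves as the parameter space, and ``very general'' means outside a countable union of proper Zariski-closed subvarieties. Irreducible rational curves on members of the family are classified by their numerical class $\gamma$; for each $\gamma$ with $F\cdot\gamma=n\ge 1$, the locus $V_\gamma\subset\mathcal{A}_d$ of surfaces admitting such a curve is Zariski closed, as the image of a relative Hilbert scheme of bounded degree. Since the set of classes is countable, the problem reduces to proving each $V_\gamma\subsetneq\mathcal{A}_d$. The key local input is adjunction combined with $K_\EE=(d-2)F$: a smooth rational multisection $C$ of degree $n$ has $N_{C/\EE}\cong\O_{\P^1}(-2-(d-2)n)$, of strictly negative degree when $d\ge 3$. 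Hence $H^0(N_{C/\EE})=0$, so $C$ is rigid inside any fixed $\EE$, while $\dim H^1(N_{C/\EE})=1+(d-2)n$ carries the obstructions to deforming $C$ as $\EE$ varies in $\mathcal{A}_d$.

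The main obstacle is turning these infinitesimal facts into uniform properness of $V_\gamma$ as $\gamma$ ranges over all numerical classes, particularly for large $n$ where rational curves become flexible and naive dimension counts weaken. A robust approach is to invoke Noether--Lefschetz on the base-changed surface $\EE_\phi\to\P^1_u$, where $\phi$ is the finite map of degree $n$ attached to $L$: this $\EE_\phi$ has height at most $nd$ and hence $p_g\ge 2$, and a nonzero Mordell--Weil element over $L$ would supply a Hodge $(1,1)$-class outside the trivial lattice of $\EE_\phi$. One then needs to verify that the variation of Hodge structure on the pullback family $\{\EE_\phi\}_{\EE\in\mathcal{A}_d}$, for fixed $\phi$, is non-degenerate enough for the Noether--Lefschetz principle to apply---essentially an infinitesimal Torelli statement for the Weierstrass sub-family sitting inside the moduli of height-$nd$ elliptic surfaces. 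Intersecting the resulting generic-Picard-rank conditions as $\gamma$ ranges over its countable set of possibilities yields the very general $E$ satisfying the theorem.
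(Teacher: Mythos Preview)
Your translation of the theorem into a statement about rational curves on $\EE$, and the reduction via the Hilbert scheme to showing that no component of the space of multisections dominates the parameter space, are both correct and match the paper's setup. The normal bundle computation is also fine, though as you yourself note it only literally applies to smooth multisections, which almost never exist here.

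The genuine gap is in your Noether--Lefschetz step. You propose to fix the base cover $\phi:\P^1_u\to\P^1_t$ and apply Noether--Lefschetz to the one-parameter family $\{\EE_\phi\}_{\EE\in\mathcal{A}_d}$. Granting the infinitesimal Torelli you need (which is itself nontrivial and not verified), this would show that for each fixed $\phi$ the set of $\EE$ with $\EE_\phi$ having an extra section is contained in a countable union of proper closed subsets of $\mathcal{A}_d$. But the covers $\phi$ of degree $n$ form a positive-dimensional (in fact $(2n-2)$-dimensional, modulo automorphisms) family, not a countable set, and the numerical class $\gamma$ of the resulting curve in $\EE$ does \emph{not} determine $\phi$. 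So when you write ``intersecting the resulting generic-Picard-rank conditions as $\gamma$ ranges over its countable set of possibilities,'' you are implicitly intersecting over uncountably many $\phi$, and there is no reason the intersection should remain the complement of a thin set. To salvage this you would need a codimension estimate: the Noether--Lefschetz locus inside $\mathcal{A}_d\times\{\phi\text{'s}\}$ must have codimension strictly larger than $\dim\{\phi\text{'s}\}\approx 2n-2$. Standard Noether--Lefschetz gives codimension $\ge 1$, and even the refined version for elliptic surfaces gives codimension $h^{2,0}(\EE_\phi)=nd-1$ only in the full moduli of height-$nd$ surfaces, not in your much smaller subfamily. Proving the needed bound on the subfamily is essentially as hard as the original theorem.

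The paper avoids this difficulty entirely. Assuming for contradiction that a component of the multisection space dominates, it constructs (following an idea of Voisin) a rank-$N$ distribution $\DD$ on the total space $\XX\to\BB$ by analyzing $\Phi_*(\Omega^{N+1}_\XX)$ via the Jacobian ring of the Weierstrass hypersurface, and shows every multisection in the family is tangent to $\DD$. The crucial point is that $\DD$ admits two explicit first integrals, the functions $j$ and $k=xA/B$, so all the putative rational curves are forced to map to a single rational curve $Z\subset\P^1_j\times\P^1_k$. A direct Riemann--Hurwitz analysis of the branching of $\P^1_t\to\P^1_j$ then shows that for generic $b$ the preimage of $Z$ in $\EE_b$ has no rational components, giving the contradiction. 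This approach never fixes $\phi$ and never appeals to Noether--Lefschetz for the pulled-back surface.
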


Here ``very general'' means that in the relevant moduli space, the
statement holds on the complement of a countable union of proper
closed subsets.  See Subsection~\ref{ss:very general} below for more
details.

The theorem shows in particular that there is no hope of producing
elliptic curves of large rank over $\C(u)$ by starting with a general
curve over $\C(t)$ and iteratively making rational field extensions.
What can be done with special elliptic curves remains a very
interesting open question about which we make a few speculations in
the last section.

Now we connect the theorem with the title of the paper.  Attached to
$E/\C(t)$ is a unique elliptic surface $\pi:\EE\to\P^1$ with the
properties that $\EE$ is smooth over $\C$, and $\pi$ is proper and
relatively minimal with generic fiber isomorphic to $E/\C(t)$.
Conversely, given a relatively minimal elliptic surface
$\pi:\EE\to\P^1$, its generic fiber is an elliptic curve over $\C(t)$.
The height of $E$ is then equal to the degree of the
invertible sheaf $\omega=(R^1\pi_*\O_\EE)^{-1}$ on $\P^1$.  It is well
known that the height $d$ of $E$ is 0 if and only if $\EE$ is a
product $E_0\times\P^1$; $d=1$ if and only if $\EE$ is a rational
surface; $d=2$ if and only if $\EE$ is a K3 surface; and $d\ge3$ if
and only if the Kodaira dimension of $\EE$ is 1.  See
\cite[Lecture~3]{Ulmer11} for details on the assertions in this
paragraph.

The Mordell-Weil group $E(\C(t))$ is canonically isomorphic to the
group of sections of $\pi:\EE\to\P^1$ via a map which sends a section
to its image in the generic fiber.  Suppose $L=\C(u)$ is a finite
extension of $\C(t)$ over which $E$ has a rational point.  Then we have
a diagram
\begin{equation*}
\xymatrix{\EE'\ar[r]^{\tilde f}\ar[d]^{\pi'}&\EE\ar[d]^{\pi}\\
\P^1_u\ar[r]_{f}\ar@/^1pc/[u]^s&\P^1_t}
\end{equation*}
Here $\pi':\EE'\to\P^1_u$ is the elliptic surface attached to $E/L$; it
is birational to (but not in general isomorphic to) the fiber product
$\EE\times_{\P^1_t}\P^1_u$.  The section $s$ corresponds to the
hypothesized rational point in $E(L)$.  It is clear that $\tilde
f(s(\P^1_u))$ is a rational curve on $\EE$ (i.e., a reduced and
irreducible subscheme of dimension 1 whose normalization is $\P^1$)
which is not contained in any fiber of $\pi$.  Conversely, if
$C\subset\EE$ is a rational curve not contained in any fiber of $\pi$
with normalization $g:\P^1\to C$, and if $L=\C(u)$ is the extension of
$\C(t)$ corresponding to $f=\pi\circ g:\P^1\to\P^1$, then we have the
following diagram:
\begin{equation*}
\xymatrix{\EE'\ar[r]\ar[d]^{\pi'}&\P^1_u\times\EE\ar[r]\ar[d]&\EE\ar[d]^{\pi}\\
\P^1_u\ar@{=}[r]\ar@/^1pc/[u]^s&\P^1_u\ar[r]_{f}\ar@/^1pc/[u]\ar[ur]^g&\P^1_t.}
\end{equation*}
Here the section in the middle comes from the universal property of
the fiber product, $\EE'$ is a relatively minimal desingularization of
$\P^1_u\times\EE$, and the section on the left is the unique lift of the
section in the middle.  Since a rational curve in $\EE$ not contained
in a fiber of $\pi$ maps finite-to-one to the base $\P^1_t$, we call it
a ``rational multisection.''  This discussion shows that the the
theorem above is equivalent to the following.

\begin{thm}\label{thm:main-surface}
  The only rational curves on a very general elliptic surface
  $\EE\to\P^1$ of height $d\ge3$ \textup{(}i.e., geometric genus
  $p_g=d-1\ge2$\textup{)} are the zero section and components of
  singular fibers.  Equivalently, $\EE$ has no rational multisections
  other than the zero section.
\end{thm}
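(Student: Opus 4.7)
The plan is to parameterize potential rational multisections by discrete invariants and show that for each type, the locus in the moduli space $\MM_d$ of height-$d$ elliptic surfaces admitting such a multisection is contained in a proper closed subvariety; a countable union argument then yields the result. The first step is a Noether--Lefschetz reduction: for a very general $\EE \in \MM_d$ with $d \geq 2$, the Néron--Severi group $\NS(\EE)_{\Q}$ has rank $2$ (generated by $[O]$ and $[F]$), and every singular fiber is of Kodaira type $I_1$. This standard infinitesimal Noether--Lefschetz argument uses $h^{2,0}(\EE) = p_g = d-1 \geq 1$ to ensure the variational Hodge structure is sufficiently ample. By Shioda--Tate it follows that $\MW(E/\C(t)) = 0$ on such $\EE$, so the only rational section is the zero section; any other rational multisection $C$ then has class $n[O] + b[F]$ with $n = C \cdot F \geq 2$ and $b \in \Z$, yielding a countable family of invariant pairs.

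For each such $(n, b)$ with $n \geq 2$, let $\NN_{n, b}$ be the moduli of pairs $(\EE, g)$ with $g \colon \P^1 \to \EE$ a morphism satisfying $g_*[\P^1] = n[O] + b[F]$. Standard deformation theory gives the tangent sequence
\[
0 \to H^0(\P^1, g^*T_\EE) \to T_{(\EE, g)} \NN_{n, b} \to T_\EE \MM_d \to H^1(\P^1, g^*T_\EE).
\]
Since $K_\EE = (d-2)F$, we have $\deg g^*T_\EE = -K_\EE \cdot g_*[\P^1] = -(d-2)n$, so by Riemann--Roch $\chi(g^*T_\EE) = 2 - (d-2)n$. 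Assuming the rightmost obstruction map is surjective at a generic point of each irreducible component of $\NN_{n, b}$, the sequence forces
\[
\dim \NN_{n, b}/\aut \P^1 \;=\; \dim \MM_d + \chi(g^*T_\EE) - 3 \;=\; 10d - 3 - (d-2)n,
\]
which is strictly less than $\dim \MM_d = 10d - 2$ for every $d \geq 3$ and $n \geq 2$. Hence the image of $\NN_{n, b}/\aut \P^1$ in $\MM_d$ is a proper subvariety.

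The main obstacle is verifying the obstruction-surjectivity hypothesis at generic points of each component of $\NN_{n, b}$. Without it, Riemann--Roch gives only the lower bound $\dim T_{(\EE,g)} \NN_{n, b} \geq \chi(g^*T_\EE) + \dim \MM_d$, and the actual dimension could exceed this by as much as $h^1(g^*T_\EE)$ when the obstruction has cokernel; indeed this happens for the zero section itself, whose pullback bundle $\OO(2) \oplus \OO(-d)$ satisfies $h^1 = d - 1 \geq 2$ and for which the zero section persists over all of $\MM_d$. To rule out this pathology for $n \geq 2$, one analyzes the Kodaira--Spencer map of the universal family over $\MM_d$: since $\MM_d$ arises from variations of the $10d + 2$ Weierstrass coefficients, the obstruction map should surject onto $H^1(\P^1, g^*T_\EE)$ for generic $(\EE, g) \in \NN_{n, b}$ when $n \geq 2$. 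Once this technical step is established, the theorem follows from the countable union of proper subvarieties $\bigcup_{n \geq 2, b \in \Z} \mathrm{image}(\NN_{n, b}/\aut \P^1 \to \MM_d)$, complemented by the Noether--Lefschetz baseline for $n = 1$.
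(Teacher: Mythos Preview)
Your proposal identifies the right dimension count but leaves the crucial step unproven. You yourself flag it: ``Once this technical step is established, the theorem follows.'' But establishing surjectivity of the obstruction map $T_\EE \MM_d \to H^1(\P^1, g^*T_\EE)$ at a generic point of \emph{every} component of $\NN_{n,b}$ is precisely the content of the theorem. A priori you know nothing about such a generic point---you do not know which $\EE$ it lies over, nor what $g$ looks like---so there is no handle for a direct verification. Your own observation that surjectivity fails for the zero section ($n=1$) shows that the Weierstrass parameterization alone does not force it; something specific to $n\ge 2$ must be invoked, and you have not said what. In effect, your proposal restates the heuristic of Section~\ref{s:heuristics} (the expected-codimension count $p_a-\dim|L|=(d-2)(e-1)>0$) in deformation-theoretic language without supplying the missing genericity argument that the paper explicitly warns is only heuristic.

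The paper's actual proof circumvents this entirely. It argues by contradiction: if some component of the multisection scheme dominates $\MM_d$, one pulls back the universal family to a base $\BB$ \'etale over $\MM_d$ and constructs, via explicit sections of $\Phi_*(\Omega^{N+1}_\XX)$, a rank-$N$ distribution $\DD$ on the total space to which the family of multisections must be tangent. The key discovery is that $\DD$ admits two rational first integrals, $j$ and $k=xA/B$, so the entire family of multisections maps under $(j,k)$ to a single rational curve $Z\subset\P^1_j\times\P^1_k$. One then checks directly that for generic $b\in\BB$ the preimage of $Z$ in $\EE_b$ has no rational component, exploiting the freedom to vary the branch locus of $\P^1_t\to\P^1_j$. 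This global argument never needs to control an arbitrary component of the Hom scheme; instead it extracts from the hypothetical dominant component a rigid geometric constraint (the fixed curve $Z$) which can then be violated by moving in moduli.
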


The theorem is obviously false for $d\le1$ since every rational or
ruled surface contains infinitely many rational curves. The same is
expected to be true in the case $d=2$ (corresponding to K3 surfaces).
See \cite{LiLiedtke12} for the most recent results in this direction.
(Thanks to Remke Kloosterman for reminding me to mention this.)  The
case $d=1$ has been used to construct elliptic curves of relatively
high rank over $\C(t)$.  See \cite{Shioda86} and \cite{Stiller87}.

Note that the theorem is strictly stronger than the theorem of
\cite{Cox90} saying that a very general $\EE$ as above has no sections
other than the zero section.  There are a number of results in the
literature along the lines of our result, saying for example that a
general hypersurface in $\P^n$ of sufficiently large degree contains
no rational curves.  Our result is related, but necessarily trickier
because the surfaces in question certainly do contain rational curves,
namely the zero section and the components of bad fibers.
Nevertheless, a variant of an idea of Voisin \cite{Voisin96} will
allow us to show that generically there are no others.

Fedor~Bogomolov points out that a very simple construction contained
in a paper with Yuri Tschinkel \cite[Rmk.~6.11]{BogomolovTschinkel07}
yields elliptic surfaces of every height $d>2$ with no rational
multisections.  More generally, for every $g\ge0$ and all sufficiently
large $d$, they produce an elliptic fibration of degree $d$ without
multiple fibers and with no multisection of geometric genus $\le g$.
Their construction is easily modified to produce elliptic fibrations
with section but without other multisections of low genus.  Our method
is completely different, and we hope it and the conjecture it suggests
cast enough light to be of independent interest.

In the following section, we give a heuristic sketch of why the
theorem should be true which is very suggestive.  The following three
sections contain a proof of the theorem which is logically independent
of the heuristic sketch.  The final section of the paper presents
further speculations based on the heuristics in Section~2.

Much of the work on this paper was carried out during a sabbatical
year in France.  It is a pleasure to thank several institutions for
their financial support and hospitality (IHES, the Universities of
Paris VI, VII, and XI), and to thank Laurent Clozel and Marc Hindry
for their help and encouragement.  Special thanks are due to Claire
Voisin for suggesting the key idea of considering the distribution in
Section~\ref{s:distribution}.

\section{Heuristics}\label{s:heuristics}
It is known (\cite{Cox90}) that a very general elliptic surface
$\pi:\EE\to\P^1$ of height $d\ge3$ has no sections other than the zero
section, and its N\'eron-Severi group has rank 2, generated by the
zero section and the class of a fiber of $\pi$: $NS(\EE)=\langle
O,F\rangle$.  (This is also implied by Theorem~\ref{thm:main-surface}.
Our proof, although related, is independent of that of \cite{Cox90}.)
Under the intersection pairing on $NS(\EE)$, we have $O^2=-d$,
$O.F=1$, $F^2=0$.  The canonical class $K=K_\EE$ is $(d-2)F$.

Suppose $C\subseteq\EE$ is a rational curve which is not the zero
section and which is not supported in a fiber of $\pi$.  In $NS(\EE)$,
we have $C\sim eO+fF$ for certain integers $e$ and $f$, namely $e=C.F$
and $f=C.O+de$.  Since $C$ is not the zero section, $C.O\ge0$ and so
$f\ge de$.  Also, since $C$ is not the zero section, it is not a
section and so $e>1$.

Now let $L$ be the invertible sheaf $\O_\EE(eO+fF)$ with integers
$e>1$ and $f\ge de$.  It is easy to see that 
\numberwithin{equation}{section}
\begin{equation}\label{eq:pi_*L}
\pi_*L\cong\OO_{\P^1}(f)\oplus\OO_{\P^1}(f-2d)\oplus\OO_{\P^1}(f-3d)
\oplus\cdots\oplus\OO_{\P^1}(f-de)
\end{equation}
\numberwithin{equation}{subsection}%
and $R^1\pi_*L=0$.  Since we assumed that $f\ge de$, we find that 
$H^1(\EE,L)=H^2(\EE,L)=0$, and so, by Riemann-Roch,
$$h^0(\EE,L)=\chi(\EE,L)=\frac12L.(L-K)+\chi(\EE,\O).$$
(Alternatively, it is easy to compute $h^0(\EE,L)$ directly from
Equation~\eqref{eq:pi_*L}.)

This shows that the linear series $|L|=\P H^0(\EE,L)$ has dimension
$(1/2)L.(L-K)+\chi(\EE,\OO)-1$.  It is also clear that this linear
series is base-point free.

The characteristic zero Bertini theorem \cite[Cor.~10.9]{HartshorneAG}
implies that the general member of the linear series $|L|$ is a smooth
curve of genus $p_a=(1/2)L.(L+K)+1$.

Generically, one expects that the singular members of the linear
series $|L|$ are curves with at worst nodes as singularities and that
the locus of curves with $\delta$ nodes has codimension $\delta$.
Since 
$$p_a-\dim|L|=L.K-\chi(\EE,\OO)+2=(d-2)(e-1)>0,$$ 
generically we do not expect to find any rational curves in $|L|$, and
therefore generically $\EE$ should have no rational multisections
other than the zero section.

More speculatively, one might expect that the codimension in moduli of
the locus of elliptic surfaces $\EE$ of height $d\ge3$ which do admit
a rational multisection $C$ with $C.F=e>1$ might be $(d-2)(e-1)$.  But
one reason for caution here is that the locus of $\EE$ with
$NS(\EE)=\langle O,F\rangle$ is not Zariski open.  In fact it is the
complement of a divisor and a countable union of closed subvarieties
of codimension $d-1$ which is dense in the classical topology.  Also,
when the N\'eron-Severi group is more complicated, rational curves may
appear in classes other than $eO+fF$ and $\EE$ may even have smooth
rational multisections.  

\section{Set up and strategy}
In this section, we establish notation and explain the strategy to
prove Theorem~\ref{thm:main-surface}.  We consider the moduli space of
elliptic surfaces of degree $d$ and the Hilbert scheme of rational
curves on these surfaces.  Arguing by contradiction, we suppose that a
general such surface carries a rational curve and deduce 
geometric consequences of this.  In the following two sections, we
construct a distribution tangent to the rational curves which, roughly
speaking, implies that they all come from one universal rational curve.
We then deduce a contradiction from this exotic situation.

\subsection{}\label{ss:moduli}
We review the construction of the moduli space of elliptic fibrations
over $\P^1$, as in \cite{Miranda81}.

Consider the affine space $\A^{10d+2}$ of pairs of homogeneous
polynomials $(A,B)$ in $\C[t_0,t_1]$ of degrees $4d$ and $6d$
respectively.  Let $T_d$ be the open subset consisting of pairs
satisfying the conditions: (i) $4A^3+27B^2\neq0$; and (ii) for every
place $v$ of $P^1_\C$, $\ord_{v}(A)<4$ or $\ord_{v}(B)<6$.  Associated
to the pair $(A,B)$ we have $a=A(t,1)$ and $b=B(t,1)$ and the elliptic
curve over $\C(t)$ with affine Weierstrass equation
\begin{equation}\label{eq:W}
y^2=x^3+ax+b.
\end{equation}
The condition (i) guarantees that this is a smooth curve, and
condition (ii) guarantees that it has height $d$ in the sense of
Section~1.

Let $\EE$ be the minimal elliptic surface associated to $E$.  More
explicitly, let $U$ be the closed subset of $\A^1\times\P^2$ defined
by the vanishing of $x^3+axz^2+bz^3-y^2z$, let $a'=A(1,t')$,
$b'=B(1,t')$, and let $U'$ be the closed subset of $\A^1\times\P^2$
defined by the vanishing of
$x^{\prime3}+a'x'z^{\prime2}+b'z^{\prime3}-y^{\prime2}z'$.  Let $\WW$ be
the result of gluing $U\setminus\{t=0\}$ to $U'\setminus\{t'=0\}$ via
the map
$$(t',[x',y',z'])=(t^{-1},[t^{-2d}x,t^{-3d}y,z]).$$
The surface $\WW$ has an obvious projection to $\P^1$ which is proper
and relatively minimal with generic fiber $E$, but $\WW$ may have
singularities.  Indeed, it has a rational double point in each fiber
where $\Delta=-16(4A^3+27B^2)$ has a zero of order $>1$.  Resolving
these double points yields a smooth, relatively minimal elliptic
surface $\pi:\EE\to\P^1$.

There is an action of $\C^*\times\PGL_2(\C)$ on $T_d$:
$\lambda\in\C^*$ acts by $\lambda(A,B)=(\lambda^4A,\lambda^6B)$, and
$\PGL_2(\C)$ acts through its standard action on $\C[t_0,t_1]$.  Two
pairs $(A,B)$ give isomorphic elliptic surfaces $\EE$ if and only if
they lie in the same orbit of $\C^*\times\PGL_2(\C)$ acting on $T_d$.
For $d\ge2$, the orbit space $T_d/(\C^*\times\PGL_2(\C))$ has the
structure of a quasi-projective variety and is the coarse moduli space
of elliptic surfaces over $\P^1$ with a section
\cite[Cor.~5.5]{Miranda81}.  We denote it by $\MM_d$; it is
irreducible of dimension $10d-2$.

It will be convenient to introduce the Zariski open subset
$T^o_d\subseteq T_d$ consisting of pairs $(A,B)$ where
$$\Delta=-16(4A^3+27B^2)$$ 
and 
$$\Gamma=(\partial A/\partial t_0)(\partial B/\partial t_1)
   -(\partial A/\partial t_1)(\partial B/\partial t_0)$$
have distinct, simple zeros and are not divisible by $t_1$.  (That
this is indeed a non-empty Zariski open set follows from
\cite[Lemma~3.1]{CoxDonagi86}.)  The image
of $T^o_d$ in $\MM_d$ contains a Zariski open which will be denoted
$\MM_d^o$. 

\subsection{}
The construction above of $\EE\to\P^1$ globalizes easily:  Over
$T_d$ we have a natural family of elliptic surfaces 
\begin{equation*}
\xymatrix{\SS\ar[d]_\Pi\ar@/^2pc/[dd]^\Phi\\
T_d\times\P^1\ar[d]\\ 
T_d}
\end{equation*}
where the fiber of $\Phi$ over $(A,B)$ is the elliptic surface
$\pi:\EE\to\P^1$ associated to the elliptic curve \ref{eq:W}.  Let
$\SS^o=\Phi^{-1}(T_d^o)$ and note that on $T_d^o$ the discriminant
$\Delta$ has simple zeroes, so the fibers of $\Phi$ are just the
surfaces $\WW$ written down above (i.e., these are already smooth
surfaces and no further blowing up is needed to arrive at a smooth
model).

\subsection{}
Now consider the Hilbert scheme $\Hom_{T_d}(\P^1_{T_d},\SS)$ of
$T_d$-morphisms from the projective line over $T_d$ to $\SS$.  We
refer to \cite{KollarRCoAV} for the construction and general
background.  Let $\Hom^{bir}_{T_d}(\P^1_{T_d},\SS)$ be the open
subscheme representing morphisms which are birational onto their image
(i.e,, generically of degree 1), and let $MS_d$ be the open subscheme
of $\Hom^{bir}_{T_d}(\P^1_{T_d},\SS)$ representing morphisms whose
image does not lie in a fiber of $\Pi$ and is not the zero section.
($MS$ stands for multisection, although ``parameterized multisection
not lying in the zero section" would be more accurate.)

We have a diagram
\begin{equation*}
\xymatrix{MS\times\P^1_{T_d}\ar[r]\ar[d]&\SS\ar[d]^\Phi\\
MS\ar[r]&T_d\ar[d]\\
&\MM_d}
\end{equation*}
where the left and bottom horizontal arrows are the obvious
projections, the top horizontal arrow is the universal parameterized
multisection, and the other arrows were constructed above.

\subsection{}\label{ss:very general}
Theorem~\ref{thm:main-surface} is equivalent to the assertion that no
component of $MS$ dominates $\MM_d$.  Indeed, $MS$ is a countable
union of quasi-projective varieties and so has countably many
components.  The image of each component in $\MM_d$ is a constructible
set and if no component dominates, then there is a countable union of
closed subvarieties of $\MM_d$ which contains the image of
$MS\to\MM_d$.  The surfaces $\EE$ whose moduli point lies outside this
union are then the ``very general'' surfaces of the theorem.

We will eventually prove Theorem~\ref{thm:main-surface} by
contradiction.  Thus, assume that some component of $MS$ dominates
$\MM_d$.  Taking a slice of $MS$ and passing to Zariski opens in $MS$
and $\MM_d$, we arrive at a diagram
\begin{equation*}
\xymatrix{\BB\times\P^1\ar[r]\ar[d]&\SS^o\ar[d]^\Phi\\
\BB\ar[r]&T^o_d\ar[d]\\
&\MM^o_d}
\end{equation*}
where $\BB$ is irreducible and $\BB\to\MM_d^o$ is \'etale, dominant, and
lies over the smooth locus of $\MM_d^o$.

To simplify notation for the rest of the proof, we write $\XX$ for the
fiber product $\BB\times_{T_d^o}\SS^o$ so that we have a diagram
\begin{equation}\label{eq:bad-family}
\xymatrix{
\BB\times\P^1=\CC\ar[rr]^i\ar[rd]_{pr_\BB}&&\XX\ar[ld]^\Phi\\
&\BB.&}
\end{equation}

The following proposition sums up the discussion in this section.
\begin{prop}\label{prop:family}
  If Theorem~\ref{thm:main-surface} is false, then there exists a
  smooth, irreducible, quasi-projective variety $\BB$ of dimension
  $10d-2$ which is \'etale over a closed subset of $T_d^o$ and a
  diagram \eqref{eq:bad-family} such that the fiber of $\Phi$ over
  each $b\in \BB$ is a Weierstrass fibration $\pi_b:\EE_b\to\P^1$ of
  height $d$, the induced morphism $\BB\to\MM_d$ is \'etale and
  dominant, and the map $\{b\}\times\P^1\to\EE_b$ is a multisection of
  $\pi_b$.
\end{prop}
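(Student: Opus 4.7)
The plan is to unwind the formalism established in the previous subsections: the assumption that Theorem~\ref{thm:main-surface} fails forces the existence of a component of $MS$ that dominates $\MM_d$, and from such a component I extract a subvariety $\BB$ of the required dimension and étaleness properties by slicing and restricting to Zariski open subsets.

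First I would pin down the equivalence between the failure of Theorem~\ref{thm:main-surface} and the existence of a dominating component. $MS$ is a countable union of quasi-projective varieties, hence has at most countably many irreducible components, each with constructible image in $\MM_d$. If no component dominates, these images are contained in a countable union of proper closed subvarieties of $\MM_d$, so every surface outside this union is ``very general'' in the sense of Subsection~\ref{ss:very general} and, by the multisection/section correspondence of the introduction, has no multisection other than the zero section; Theorem~\ref{thm:main-surface} would then hold. Thus if the theorem is false, some irreducible component $N$ of $MS$ maps dominantly to $\MM_d$. Replacing $N$ by $N\cap\Phi^{-1}(T_d^o)$ keeps it irreducible, and its image in $\MM_d^o$ remains dense because $\MM_d^o\subseteq\MM_d$ is Zariski open and dense.

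Next I would cut $N$ to the correct dimension. Write $r$ for the generic fiber dimension of $N\to\MM_d^o$; after fixing a projective embedding of $N$ and intersecting with $r$ sufficiently general hyperplanes, one obtains an irreducible subvariety of dimension exactly $10d-2$ that still dominates $\MM_d^o$ with generically finite fibers. In characteristic zero, generic smoothness and generic étaleness then let me pass to a Zariski open $\BB$ that is smooth, irreducible, of dimension $10d-2$, and étale over a Zariski open of $\MM_d^o$. Let $Z\subseteq T_d^o$ denote the Zariski closure of the image of $\BB$; it is a closed subvariety of $T_d^o$ of dimension $10d-2$, and after one further shrinking of $\BB$ to the étale locus of $\BB\to Z$ (non-empty because we are in characteristic zero) I have produced the required ``étale over a closed subset of $T_d^o$.''

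The remaining assertions — that the fiber of $\Phi$ over each $b\in\BB$ is a Weierstrass fibration of height $d$, that $\BB\to\MM_d$ is étale and dominant, and that $\{b\}\times\P^1\to\EE_b$ is a multisection of $\pi_b$ — follow directly from the definitions of $T_d^o$, $\SS^o$, and $MS$ together with the construction above. The main subtlety, and therefore the point requiring the most care, is the dimension bookkeeping in the presence of the $\C^*\times\PGL_2$-action: since $T_d^o\to\MM_d^o$ has four-dimensional fibers, $N$ need not dominate $T_d^o$, so one cannot simply pull back a slice from $T_d^o$. The fix is to slice inside $N$ itself and only afterwards form the closure of its image in $T_d^o$; this is precisely why the proposition is phrased as ``étale over a closed subset of $T_d^o$'' rather than as étale over $T_d^o$ itself.
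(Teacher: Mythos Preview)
Your proposal is correct and follows essentially the same route as the paper: the paper's ``proof'' is just the discussion in Subsection~\ref{ss:very general} preceding the proposition (which it explicitly describes as a summary), namely the countability argument for the equivalence with a dominating component of $MS$, followed by ``taking a slice of $MS$ and passing to Zariski opens.'' You have simply made explicit what the paper leaves as a one-line sketch, including the useful remark that $N$ need not dominate $T_d^o$ because of the four-dimensional $\C^*\times\PGL_2$-orbits, which explains the phrasing ``\'etale over a closed subset of $T_d^o$.''
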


In the next section we will collect some differential information
which severely restricts the image of the upper horizontal map $i$.
In the following section, we will see that the restrictions are so
severe that we arrive at a contradiction.

\section{Constructing a distribution}\label{s:distribution}

Throughout this section, we assume that Theorem~\ref{thm:main-surface} is
false and we let $\XX\to B$ be the family of elliptic surfaces
constructed in Proposition~\ref{prop:family}.
 
Let $N=\dim \BB=10d-2$.  In this section we will first show that
$\Phi_*(\Omega^{N+1}_\XX)$ is locally free on $\BB$ of rank $(d-2)N$.
Then we will show that for all $x$ in a certain open subset of
$\XX$, the sections of $\Phi_*(\Omega^{N+1}_\XX)$ generate a
codimension 2 subspace of the fiber of $\Omega^{N+1}_\XX$ at $x$.
This subspace determines a codimension 2 subspace of the tangent space
$T_{\XX,x}$, i.e., we have a distribution of rank $N$ over an open
subset of $\XX$.

If $\omega$ is a section of $\Phi_*(\Omega^{N+1}_\XX)$ over some open
in $B$, then $i^*\omega$ is a section of
$$pr_{\BB*}(\Omega^{N+1}_\CC)\cong K_\BB\tensor
pr_{\BB*}(\Omega^1_{\P^1})=0.$$ The equality $i^*\omega=0$ amounts to
saying that the image of $i$ is tangent to the distribution above
(i.e., $di(T_{\CC,c})$ contains the subspace).
This imposes strong differential conditions on the image of
$i:\CC\to\XX$ and will eventually lead to a contradiction.

\subsection{}
Start with the exact sequence
$$0\to T_{\XX/\BB}\to T_{\XX}\to\Phi^*(T_\BB)\to0$$
of locally free sheaves on $\XX$, tensor with
$K_\XX\cong\Phi^*(K_\BB)\tensor\Omega^2_{\XX/\BB}$, and take the direct
image under $\Phi$.  Noting that $\Phi_*(\Omega^1_{\XX/\BB})=0$, we
obtain an exact sequence
$$0\to\Phi_*(\Omega^{N+1}_\XX)\to K_\BB\tensor\Phi_*(\Omega^2_{\XX/\BB})
\tensor T_\BB\to K_\BB\tensor R^1\Phi_*(\Omega^1_{\XX/\BB})$$ of sheaves on
$\BB$.  The middle and right sheaves are locally free of ranks
$(d-1)N$ and $10d$ respectively, and the map between them is
the identity on $K_\BB$ tensored with the Kodaira-Spencer map
$\Phi_*(\Omega^2_{\XX/\BB})\tensor T_\BB\to
R^1\Phi_*(\Omega^1_{\XX/\BB})$.

For each $b\in \BB$, the classes of the zero section and a fiber of
$\pi_b$ are linearly independent in $H^1(\EE_b,\Omega^1_{\EE_b/\C})$
and we get two everywhere independent sections of
$R^1\Phi_*(\Omega^1_{\XX/\BB})$ over $\BB$.  We write
$R^1\Phi_*(\Omega^1_{\XX/\BB})_0$ for their orthogonal complement.
This is a locally free subsheaf of $R^1\Phi_*(\Omega^1_{\XX/\BB})$ of
rank $N$.  Since the classes of the zero section and fiber live over
the whole base $\BB$, the image of the Kodaira-Spencer map lands in 
$R^1\Phi_*(\Omega^1_{\XX/\BB})_0$.  Thus we have an exact sequence
$$0\to\Phi_*(\Omega^{N+1}_\XX)\to K_\BB\tensor\Phi_*(\Omega^2_{\XX/\BB})
\tensor T_\BB\to K_\BB\tensor (R^1\Phi_*(\Omega^1_{\XX/\BB}))_0.$$

Next, we will show that the right hand map is
surjective and so the kernel is locally free.

\begin{prop}\label{prop:locally-free}
  The sheaf $\Phi_*(\Omega^{N+1}_\XX)$ on $B$ is locally free of rank
  $(d-2)N$. 
\end{prop}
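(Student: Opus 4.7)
The plan is to reduce the proposition to pointwise surjectivity of the map on the right of the displayed exact sequence, and then to establish that surjectivity. First I would verify that the middle and right terms are locally free of the claimed ranks. Since we have restricted to $T_d^o$, the map $\Phi\colon \XX\to \BB$ is smooth and proper, with fibers of constant Hodge numbers $h^{2,0} = d - 1$ and $h^{1,1} = 10d$; by Grauert's theorem, $\Phi_*(\Omega^2_{\XX/\BB})$ and $R^1\Phi_*(\Omega^1_{\XX/\BB})$ are locally free of these ranks, and the subsheaf $(R^1\Phi_*\Omega^1_{\XX/\BB})_0$ is locally free of rank $N = 10d - 2$ as the orthogonal complement, under the relative Poincar\'e pairing, of the rank-two subbundle of constant sections generated by $[O]$ and $[F]$. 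Smoothness of $\BB$ makes $T_\BB$ and $K_\BB$ locally free.

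Granting local freeness of the outer terms, the conclusion of the proposition is equivalent to the surjectivity of the (untwisted) Kodaira--Spencer map
$$\Phi_*(\Omega^2_{\XX/\BB}) \tensor T_\BB \longrightarrow (R^1\Phi_*\Omega^1_{\XX/\BB})_0,$$
since then its kernel is locally free of rank $(d-1)N - N = (d-2)N$, and twisting by the line bundle $K_\BB$ preserves this. Surjectivity of a map of locally free sheaves on a reduced Noetherian base can be checked fiberwise, so it suffices to show that at each closed point $b \in \BB$ the cup-product pairing
$$H^0(\EE_b,\Omega^2_{\EE_b}) \tensor T_{\BB,b} \longrightarrow H^1(\EE_b,\Omega^1_{\EE_b})_0, \qquad \omega\otimes\xi \mapsto \omega\cup\kappa(\xi),$$
is surjective, where $\kappa$ denotes the fiberwise Kodaira--Spencer map.

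This pointwise surjectivity is the main obstacle. Since $\BB \to \MM_d^o$ is \'etale and dominant, $T_{\BB,b}$ is identified with the tangent space to moduli at $[\EE_b]$; via the tautological family on $T_d^o$, tangent vectors correspond to infinitesimal Weierstrass perturbations $(\delta A,\delta B)$ modulo the infinitesimal action of $\C^\times \times \PGL_2$, and $\kappa$ has an explicit \v{C}ech-cocycle representative on the two-chart cover $\{U,U'\}$ of Section~3.1. Representing sections of $K_{\EE_b}$ as $p(t)\,dt\wedge dx/y$ with $\deg p \le d - 2$ and representing classes in $H^1(\Omega^1_{\EE_b})_0$ via transition data on the same cover, the cup-product becomes an explicit bilinear form in $(\delta A,\delta B)$ and $p$, and surjectivity onto the codimension-two subspace reduces to a polynomial-multiplication statement which can be verified by monomial counting. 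Alternatively, one may translate the desired surjectivity via Serre duality (using the self-duality of $H^1(\Omega^1_{\EE_b})_0$ under the cup-product polarization) into an injectivity statement of infinitesimal-Torelli type for elliptic surfaces of Kodaira dimension $1$, which is available in the literature.
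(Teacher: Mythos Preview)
Your reduction is exactly the one the paper makes: local freeness of the outer terms plus fiberwise surjectivity of the cup-product map
\[
H^0(\EE_b,\Omega^2_{\EE_b})\otimes T_{\BB,b}\longrightarrow H^1(\EE_b,\Omega^1_{\EE_b})_0
\]
gives the result. The difference lies entirely in how the surjectivity is established.

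The paper does not use \v Cech cocycles on the two-chart cover or a monomial count. It passes to the Jacobian ring $R=S/(F_{t_0},F_{t_1},F_x,F_y)$ of the weighted hypersurface $\{F=0\}\subset\P(1,1,2d,3d)$ and invokes the identifications (from Cox--Donagi) $H^{2,0}\cong R^{d-2}$, $T_{\BB,b}\cong R^{6d}$, $H^{1,1}_0\cong R^{7d-2}$, under which the cup product becomes ring multiplication. Surjectivity of $R^{d-2}\otimes R^{6d}\to R^{7d-2}$ is then proved by exhibiting an explicit basis $\{v_i\}$ of $R^{6d}$ indexed by the roots $\lambda_i$ of $\Gamma$, using Cox--Donagi's Lemma~2.4 on the kernel of multiplication by a linear form; one checks that $\{t_1^{d-2}v_i\}$ is then a basis of $R^{7d-2}$. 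This is sharper than a dimension count: it also yields the explicit decomposition of the kernel as $\bigoplus_i W_i\otimes\C v_i$, and that description is what feeds directly into the next proposition, where the sections of $\Phi_*\Omega^{N+1}_\XX$ are written down concretely. Your ``monomial counting'' sketch, even if it could be made to work, would not produce this basis and would leave you without the input needed downstream.

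Your alternative via Serre duality has a genuine gap. Dualizing surjectivity of $R^{d-2}\otimes R^{6d}\to R^{7d-2}$ through the Macaulay pairing (socle in degree $14d-4$) gives injectivity of
\[
R^{7d-2}\longrightarrow \Hom(R^{d-2},R^{8d-4}),
\]
i.e.\ ``no nonzero class in $H^{1,1}_0$ is killed by every holomorphic $2$-form.'' This is \emph{not} the infinitesimal Torelli theorem, which is injectivity of $R^{6d}\to\Hom(R^{d-2},R^{7d-2})$; the two statements involve different graded pieces and neither formally implies the other. So ``available in the literature'' is not justified as stated, and you would still need an argument specific to this Jacobian ring---which is precisely what the paper supplies via the $\Gamma$-root basis.
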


\begin{proof}
Consider the graded rings
$$S_0=\C[t_0,t_1]\subseteq S=\C[t_0,t_1,x,y]$$ 
where the degrees of $t_0$ and $t_1$ are 1, the degree of $x$ is $2d$,
and the degree of $y$ is $3d$.  Let $\P=\P(1,1,2d,3d)$ be the
weighted projective space $\Proj(S)$.  (See \cite{Dolgachev82} for
general background on weighted projective varieties.)

Fix a point $b\in \BB$ mapping to a pair $(A,B)\in T^o_d$ and let
$\EE=\Phi^{-1}(b)$, the elliptic surface associated to $(A,B)$.  

Let $F=x^3+Ax+B-y^2$, a homogeneous element of $S$ of degree $6d$.  It
is easy to see that the weighted hypersurface $\Proj(S/(F))$ is
isomorphic to $\EE$ with its zero section collapsed to a point.  (Here
we use that $(A,B)$ is in $T^o_d$ so that $\Delta$ has distinct roots
and $\EE=\WW$ in the notation of Subsection~\ref{ss:moduli}.)  This
construction globalizes (i.e., we have a family over $T_d^o$), so the
deformations of $\EE$ preserving the zero section are the same as
those of the hypersurface.

To make this more precise, we use subscripts to denote partial
derivatives, so, e.g., $F_{t_0}:=\partial F/\partial t_0$.  Let $R$ be
the Jacobian ring $S/(F_{t_0},F_{t_1},F_x,F_y)$ and use superscripts
to denote the graded pieces.  Then works of Carlson-Griffiths and
Saito explained in \cite{CoxDonagi86} show that $H^1(\EE,T_\EE)_0$,
the subspace of $H^1(\EE,T_\EE)$ corresponding to first order
deformations which preserve the classes of the zero section and the
fiber, is isomorphic to $R^{6d}$.  Explicitly, an element of $R^{6d}$
can be written in the form $\tilde Ax+\tilde B$ where $\tilde A\in
S_0^{4d}$ and $\tilde B\in S_0^{6d}$.  The corresponding first order
deformation of $\Proj(S/(F))$ is the subscheme of
$\P\times\Spec\C[s]/(s^2)$ defined by the vanishing of
$$x^3+(A+s\tilde A)x+(B+s\tilde B)-y^2.$$

It is also explained in \cite{CoxDonagi86} that
$H^0(\EE,\Omega^2_\EE)\cong R^{d-2}$.  Explicitly, $p(t_0,t_1)\in
S_0^{d-2}=R^{d-2}$ corresponds to the 2-form on $\EE$ which on the
affine piece $\Spec\C[t,x,y]/(x^3+ax+b-y^2)$ is equal to
$p(t,1)dt\,dx/2y$.

One also knows by \cite{CoxDonagi86} that
$H^1(\EE,\Omega^1_\EE)_0\cong R^{7d-2}$ and that the Kodaira-Spencer
map
$$H^0(\EE,\Omega^2_\EE)\tensor 
H^1(\EE,T_\EE)_0\to H^1(\EE,\Omega^1_\EE)_0$$
can be identified with the map 
$$R^{d-2}\tensor R^{6d}\to R^{7d-2}$$
induced by multiplication in $S$.  Thus to compute the fiber at $b$ of
the sheaf $\Phi_*(\Omega^{N+1}_\XX)$, we should compute the kernel of
the multiplication map $R^{d-2}\tensor R^{6d}\to R^{7d-2}$.

We will make this more explicit following Cox and Donagi.  Recall that
since $(A,B)\in T_d^o$, we have that $t_1$ does not divide $\Gamma$.
Lemma 2.4 of \cite{CoxDonagi86} says that for $\lambda\in\C$ and
$6d-2\le j\le 8d-1$, the kernel of the multiplication map
$(t_0-\lambda t_1):R^{j}\to R^{j+1}$ is non-zero if and only if
$t_0-\lambda t_1$ divides $\Gamma=A_{t_0}B_{t_1}-A_{t_1}B_{t_0}$.  Let
$\lambda_i$, $i=1,\dots,10d-2$, be the roots of $\Gamma(t,1)$ and
choose a non-zero vector $v_i\in R^{6d}$ in the kernel of $t_0-\lambda
t_1$.  Since the dimension of $R^{6d}$ is $10d-2$, simple linear
algebra shows that $\{v_i\}$ is a basis of $R^{6d}$.  Let
$w_i=t_1^{d-2}v_i\in R^{7d-2}$.  Since $t_1$ does not divide $\Gamma$,
it is clear that $\{w_i\}$ is a basis of $R^{7d-2}$.  In terms of
these bases, it is evident that the multiplication map
$$R^{d-2}\tensor R^{6d}\to R^{7d-2}$$
sends $p(t_0,t_1)\tensor v_i$ to $p(\lambda_i,1)w_i$.  Therefore,
$$\ker\left(R^{d-2}\tensor R^{6d}\to R^{7d-2}\right)\cong
\oplus_{i=1}^{N}W_i\tensor \C v_i$$ where $W_i=\{p\in
R^{d-2}|p(\lambda_i,1)=0\}$, a codimension 1 subspace of $R^{d-2}$.
(It is an interesting question whether the directions $v_i$ in moduli
have any geometric significance.)

This calculation shows that for every $b\in \BB$, the 
map $R^{d-2}\tensor R^{6d}\to R^{7d-2}$ is surjective and that the
dimension of the fiber of $\Phi_*(\Omega^{N+1}_\XX)$ at $b$ is $(d-2)N$.
Thus the fibers of the coherent the sheaf $\Phi_*(\Omega^N_{\XX/\BB})$
have constant dimension, and it is therefore locally free of rank
$(d-2)N$.  
\end{proof}

\subsection{}
Our next task is to write down explicitly sections of
$\Phi_*(\Omega^{N+1}_\XX)$ in a neighborhood of any $b\in
B$.  

Fix a point $b\in \BB$ mapping to $(A,B)\in T^o_d$.  We
constructed above a basis $v_1,\dots,v_N$ of the tangent space
$T_{B,b}$ indexed by the roots $\lambda_1,\dots,\lambda_N$ of
$\Gamma$.  Let $s_1,\dots,s_N$ be a system of parameters at $b$ such
that $\partial/\partial s_i|_b=v_i$.  

For notational simplicity, we choose a root $\lambda_i$ and consider
the 1-parameter deformation of $\EE_b$ in the corresponding direction
$v_i$.  Thus, let $\lambda=\lambda_i$ and $s=s_i$.  Since $\lambda$ is
a root of $\Gamma$, there is a non-trivial solution $(\alpha,\beta)$
of
$$
\begin{pmatrix}
  A_{t_0}(\lambda,1)&  A_{t_1}(\lambda,1)\\
  B_{t_0}(\lambda,1)&  B_{t_1}(\lambda,1)
\end{pmatrix}
\begin{pmatrix}
  \alpha\\ \beta
\end{pmatrix}
=0.$$
Fix a solution and define $\tilde Ax+\tilde B\in R^{6d}$ by
$$\tilde Ax+\tilde B=t_1^2\frac{\alpha F_{t_0}(\lambda,1)+\beta
  F_{t_1}(\lambda,1)}{t_0-\lambda t_1}$$
where, as usual, $F=x^3+Ax+B-y^2$.

The 1-parameter deformation of $\EE_b$ in the direction $v=v_i$
corresponds to the family of hypersurfaces $\YY$ defined by
$$y^2=x^3+(A+s\tilde A)x+(B+s\tilde B)$$
in $\A^1_s\times\P$.  Let $\phi:\ZZ\to\A^1_s$ be the corresponding
family of elliptic surfaces with section, so that
$\phi^{-1}(0)=\EE_b$.  

It will also be convenient to consider the affine open
subset $\ZZ^o\subset\ZZ$ defined by
$$y^2=x^3+(a+s\tilde a)x+(b+s\tilde b)$$
in the $\A^4$ with coordinates $x,y,s,t$ where $t=t_0/t_1$,
$a(t)=A(t,1)$, etc.  

\begin{prop}\label{prop:sections}
  Let $p(t)\in\C[t]$ be a polynomial of degree $\le d-3$ and let
$$\omega=p(t)
\left(\frac{(t-\lambda)dtdx}{2y}+\frac{(\alpha-\beta t)dsdx}{2y}
  +\frac{(2d\,\beta x)dsdt}{2y}\right).$$
Then $\omega$ is a regular 2-form on a neighborhood of $s=0$ in
$\ZZ^o$ and it extends to a section of $\phi_*(\Omega^2_\ZZ)$ over a
neighborhood of $0\in\A^1_s$.  Similarly,
$$ds_1\cdots\widehat{ds_i}\cdots ds_N\wedge\omega$$
extends to a section of $\Phi_*(\Omega^{N+1}_\XX)$ in a neighborhood
of $b\in B$.
\end{prop}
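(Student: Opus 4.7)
The plan is to verify regularity of $\omega$ on $\ZZ^o$ by direct computation in the ambient coordinates $(x,y,s,t)$, paying particular attention to the potential pole locus $\{y=0\}$, and then to extend the resulting regular form to all of $\ZZ$ by a change-of-coordinate check at $t=\infty$ and at resolved singular fibers. On the open locus $\{y\neq 0\}$ the function $2y$ is a unit, so $\omega$ is manifestly regular there; everything interesting happens along the codimension-one subvariety $\{y=0\}\cap\ZZ^o$.

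Near a point $P$ of $\ZZ^o$ with $y=0$ and $F_x:=3x^2+(a+s\tilde a)\neq 0$ (which is the generic situation once smoothness of $\ZZ^o$ is imposed), I would use $(y,s,t)$ as local parameters and eliminate $dx$ by means of the relation $2y\,dy=F_x\,dx+F_s\,ds+F_t\,dt$ coming from $dF=0$ on $\ZZ^o$. A short calculation, using $dt\wedge ds=-ds\wedge dt$, then rewrites $\omega$ as
\[
\omega=p(t)\left[\frac{t-\lambda}{F_x}\,dt\wedge dy+\frac{\alpha-\beta t}{F_x}\,ds\wedge dy+\frac{\Xi}{2yF_x}\,ds\wedge dt\right],
\]
where $\Xi:=(t-\lambda)F_s-(\alpha-\beta t)F_t+2d\beta x\,F_x$. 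The first two coefficients are visibly regular at $y=0$, and the entire problem reduces to the divisibility claim $\Xi\equiv 0\pmod{y}$ on $\ZZ^o$.

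The key technical step is therefore the identity $\Xi\equiv 6d\beta y^2\pmod{F}$, whose verification is the main obstacle. I would expand $\Xi$ as a polynomial in $x$ using $F_s=\tilde ax+\tilde b$, $F_t=(a'+s\tilde a')x+(b'+s\tilde b')$, $F_x=3x^2+(a+s\tilde a)$; eliminate $x^3$ by substituting $x^3=y^2-(a+s\tilde a)x-(b+s\tilde b)$ from $F=0$; and collect. The crucial point is that the $x^1$ and $x^0$ coefficients of the reduction vanish \emph{precisely} by the defining relations $(t-\lambda)\tilde a=(\alpha-\beta t)a'+4d\beta a$ and $(t-\lambda)\tilde b=(\alpha-\beta t)b'+6d\beta b$, which in turn encode the kernel condition on $(\alpha,\beta)$ at the point $(\lambda,1)$. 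The coefficient $2d\beta x$ of the third summand in $\omega$ is chosen exactly so that this cancellation closes up and only the $6d\beta y^2$ residue survives; this then contributes the regular quantity $3d\beta y/F_x$ to the $ds\wedge dt$ component. At the finitely many points of $\{y=0\}$ where $F_x=0$, smoothness of $\ZZ^o$ forces one of $F_s$ or $F_t$ to be nonzero, and a parallel computation (using an alternative choice of local coordinates) gives the same conclusion.

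For the extension to $\ZZ\setminus\ZZ^o$, two things must be checked. First, at the fiber $t=\infty$, one applies the coordinate change $t'=1/t$, $x'=t^{-2d}x$, $y'=t^{-3d}y$ from the construction of $\EE$ in Subsection~\ref{ss:moduli} and verifies that $\omega$ transforms to a regular form; this is the place where the degree bound $\deg p\le d-3$ is used, because $p(t)(t-\lambda)$ then has degree $\le d-2$, the correct threshold for the standard holomorphic $2$-form $dt\wedge dx/2y$ to be regular at infinity. Second, since $(A,B)\in T_d^o$, the discriminant $\Delta$ has simple zeros, so every singular fiber is nodal and $\WW=\EE$ already; no further resolution is required. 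Finally, for the $(N+1)$-form $ds_1\wedge\cdots\widehat{ds_i}\cdots\wedge ds_N\wedge\omega$, the forms $ds_j$ are pulled back from $\BB$ via $\Phi$ and contribute no singularities, so wedging with the regular $\omega$ yields the desired section of $\Phi_*(\Omega^{N+1}_\XX)$ over a neighborhood of $b$. The only substantive work is the identity $\Xi\equiv 6d\beta y^2\pmod F$; everything else is bookkeeping.
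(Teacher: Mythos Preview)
Your proof is correct and follows essentially the same route as the paper's: both hinge on the identity $(t-\lambda)f_s-(\alpha-\beta t)f_t+2d\beta\,x f_x=6d\beta\,y^2$ on $\ZZ^o$ (your $\Xi\equiv 6d\beta y^2$ is exactly equation~\eqref{eq:lambda-tilde} rearranged), the only packaging difference being that the paper shows uniformly that each of $f_x\omega$, $f_y\omega$, $f_s\omega$, $f_t\omega$ is a combination of the manifestly regular $2$-forms obtained by wedging $dy=(f_x\,dx+f_s\,ds+f_t\,dt)/2y$ with $dx,ds,dt$, rather than case-splitting on which partial is a unit. One small omission: your discussion of $\ZZ\setminus\ZZ^o$ covers $t=\infty$ and the (absence of) resolved singular fibers, but not the zero section $x=\infty$, where a brief coordinate check is also needed.
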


\begin{proof}
  Let $f=x^3+(a+s\tilde a)x+(b+s\tilde b)-y^2$.  It is obvious that
  $f_y\omega=-2y\omega$ is everywhere regular on $\ZZ^o$.
  Straightforward calculations (sketched below) show that $f_x\omega$,
  $f_s\omega$, and $f_t\omega$ are also everywhere regular on $\ZZ^o$.
  Since $\ZZ^o$ is smooth in a neighborhood of $s=0$, this shows that
  $\omega$ is a regular 2-form on that neighborhood.  That it extends
  to a neighborhood of $s=0$ in $\ZZ$ follows from similar
  calculations.  The last sentence of the proposition follows
  immediately from what precedes it.

It remains to sketch the calculation showing that $f_x\omega$, etc.,
are regular.   Noting that
$$0=df=f_xdx+f_ydy+f_sds+f_tdt$$
on $\ZZ^o$, it follows that the 1-form
$$dy=\frac{f_xdx+f_sds+f_tdt}{2y}$$
is everywhere regular on $\ZZ^o$.  From this we deduce that the
following 2-forms are regular on $\ZZ^o$:
$$\frac{f_tdtdx+f_sdsdx}{2y},\quad
\frac{f_xdxdt+f_sdsdt}{2y},\quad\text{and}\quad
\frac{f_xdxds+f_tdtds}{2y}.$$
The last ingredient in the equality defining $\tilde a$ and $\tilde
b$:
\begin{equation}\label{eq:lambda-tilde}
(t-\lambda)f_s=(\alpha-\beta t)f_t+(2d)\beta(3y^2-xf_x).
\end{equation}
Using this equality, we may rewrite $f_x\omega$, etc., as combinations
of the everywhere regular 2-forms above.  This completes the sketch of
the calculation and the proof of the proposition.
\end{proof}

\subsection{}
Note that we have constructed $(d-2)N$ linearly independent sections
of $\Phi_*(\Omega^{N+1}_\XX)$ in a neighborhood of $b$.  The dimension
count in Proposition~\ref{prop:locally-free} shows that they are a
basis.  It is evident that at a general point $x\in\XX$, these
sections span a subspace of the fiber of $\Omega^{N+1}_\XX$ of
dimension $N$.  

For reasons which will be clear just below, we introduce
$U\subset\XX$, defined to be the open subset of $\XX$ where $\Gamma$
does not vanish.  In slightly different notation, this is the open set
where the coefficient $(t-\lambda)$ in $\omega$ does not vanish.

Suppose $x\in U$.  Choosing an identification
$\bigwedge^{N+2}T^*_{\XX,x}\cong\C$ yields an identification of the
  $\bigwedge^{N+1}T^*_{\XX,x}$, the fiber of $\Omega^{N+1}_\XX$ at
    $x$, with $T_{\XX,x}$.  The subspace constructed above corresponds
    to a subspace of $T_{\XX,x}$ of dimension $N$ which is independent
    of the choice.  We write $\DD$ for the resulting distribution of
    rank $N$ on $U\subset \XX$.

The existence of this distribution $\DD$ of codimension 2 is the analog of
the global generation result of \cite{Voisin96}, taking into account
the two classes of rational curves lying on every $\EE_b$ in our family.

Note that since $t-\lambda\neq0$ on $U$, we
have that $\DD_x\subset T_{\XX,x}$ projects isomorphically onto the
tangent space of $B$.  Cox and Donagi showed
\cite[Prop.~3.3]{CoxDonagi86} that the zeros $t-\lambda$ of $\Gamma$
are exactly the branch points away from $j=0$ and $j=1728$ of the
$j$-invariant mapping $j:\P^1_t\to\P^1_j$ induced by the fibration
$\EE_b\to\P^1_t$.  Since the image of $i$ is a multisection, it
meets $U$.

\subsection{}
To finish this section, we simply note that the condition
$i^*(\omega)=0$ observed at the beginning of the section is equivalent
to the assertion that $i(\CC)$ is tangent to $\DD$.  More precisely,
for all $c$ in some non-empty open subset of $\CC$, we have $i(c)\in
U$ and 
$$di(T_{\CC,c})\supset \DD_{i(c)}.$$

The following summarizes the discussion of this section.

\begin{prop}
  Let $U\subset\XX$ be the open subset were $\Gamma$ does not vanish.
  The construction above yields a distribution $\DD$ on $U$ of
  dimension $N$ such that for all $x\in U$, $\DD_x\subset T_{\XX,x}$
  projects isomorphically onto $T_{\BB,\Phi(x)}$.  The image of
  $i:\CC=\BB\times\P^1\to\XX$ meets $U$, and for all $c\in\CC$ such
  that $i(c)\in U$, we have $di(T_{\CC,c})\supset \DD_{i(c)}.$
\end{prop}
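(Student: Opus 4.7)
The plan is to collect the content of the preceding subsections, since the proposition is essentially their summary. All four assertions---existence of $\DD$, its rank being $N$, its projection to $T_\BB$ being an isomorphism, non-empty intersection of $i(\CC)$ with $U$, and tangency along $i$---should fall out of the explicit construction of sections in Proposition~\ref{prop:sections} together with the dimension count of Proposition~\ref{prop:locally-free}.

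First I would construct $\DD$ locally near any $b\in \BB$ as follows. By Proposition~\ref{prop:locally-free}, $\Phi_*(\Omega^{N+1}_\XX)$ is locally free of rank $(d-2)N$, and Proposition~\ref{prop:sections} produces $(d-2)N$ sections $\eta_{i,p}=ds_1\wedge\cdots\wedge\widehat{ds_i}\wedge\cdots\wedge ds_N\wedge\omega_i(p)$ near $b$, indexed by a root $\lambda_i$ of $\Gamma$ and a polynomial $p\in\C[t]_{\le d-3}$; by dimension count these form a local basis. Fixing a local trivialization of $K_\XX=\bigwedge^{N+2}\Omega^1_\XX$ (for instance via the Weierstrass coordinates) identifies $\Omega^{N+1}_\XX$ with $T_\XX$ by interior product; each $\eta_{i,p}$ becomes a tangent vector field $v_{i,p}$, and I define $\DD_x$ to be the span of the $v_{i,p}(x)$.

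To verify $\dim\DD_x=N$ and that $\DD_x\to T_{\BB,\Phi(x)}$ is an isomorphism on $U$, the key observation is that the explicit formula for $\omega_i(p)$ in Proposition~\ref{prop:sections} carries $p(t)$ as an overall factor, so $v_{i,p}(x)=p(t(x))\cdot w_i(x)$ for a single vector $w_i(x)$ whose $\partial/\partial s_i$-component is a nonzero scalar multiple of $(t(x)-\lambda_i)/(2y(x))$. On $U$ the factor $t(x)-\lambda_i$ never vanishes, so $w_1(x),\dots,w_N(x)$ project to nonzero multiples of the basis $v_1,\dots,v_N$ of $T_{\BB,\Phi(x)}$; this simultaneously forces $\dim\DD_x=N$ and the projection to be an isomorphism. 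For the meeting statement, the Cox-Donagi result cited above identifies the zero locus of $\Gamma(\cdot,1)$ with the finite branch locus of the $j$-invariant map $\P^1_t\to\P^1_j$ away from $j=0,1728$, so $\EE_b\setminus U$ is a finite union of fibers of $\pi_b$; a multisection cannot be contained in finitely many fibers, so $i(\CC)\cap U\ne\emptyset$.

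For the tangency assertion, any local section $\eta$ of $\Phi_*(\Omega^{N+1}_\XX)$ pulls back to a section of $pr_{\BB,*}(\Omega^{N+1}_\CC)=K_\BB\otimes pr_{\BB,*}\Omega^1_{\P^1}=0$, so $i^*\eta\equiv 0$. At any $c$ with $i(c)=x\in U$ at which $di$ is injective, this says the $(N+1)$-form $\eta(x)$ vanishes on the $(N+1)$-plane $T_1:=di(T_{\CC,c})\subset T_{\XX,x}$; by a short linear-algebra check in an ambient space of dimension $N+2$, such $\eta$ correspond under $\Omega^{N+1}_\XX|_x\cong T_{\XX,x}$ precisely to tangent vectors lying in $T_1$. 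Hence $\DD_x\subset T_1=di(T_{\CC,c})$. I expect the only subtlety to be the locus where $di$ drops rank, where one either passes to the open subset on which $i$ is an immersion or argues by semicontinuity to extend the containment; everything else is bookkeeping.
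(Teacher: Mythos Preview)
Your proposal is correct and follows the paper's approach exactly. The paper presents this proposition explicitly as a summary of the section (``The following summarizes the discussion of this section''), and you have correctly assembled the ingredients: the local basis from Propositions~\ref{prop:locally-free} and~\ref{prop:sections}, the observation that the $\partial/\partial s_i$-coefficient of the dual vector field is (up to normalization) $p(t)(t-\lambda_i)$ while the $\partial/\partial s_j$-coefficients vanish for $j\neq i$, the Cox--Donagi identification of the zeros of $\Gamma$ with branch points of the $j$-map, and the vanishing $i^*\eta=0$ from the opening paragraph of the section. Your linear-algebra check that, in dimension $N+2$, an $(N+1)$-form vanishes on an $(N+1)$-plane $T_1$ if and only if its dual vector lies in $T_1$ is exactly the content the paper leaves implicit in the sentence ``the condition $i^*(\omega)=0$\dots is equivalent to the assertion that $i(\CC)$ is tangent to $\DD$.''

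Your caution about points where $di$ drops rank is well placed; the paper's own text in the paragraph preceding the proposition in fact only asserts the containment ``for all $c$ in some non-empty open subset of $\CC$,'' which is all that is used downstream, so your proposed resolution (restrict to the immersive locus) matches what the paper actually needs.
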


The corollary will turn out to be such a strong restriction on $i$
that it cannot exist.

\section{Deducing a contradiction}
We continue to assume that Theorem~\ref{thm:main-surface} is false,
and we consider the diagram
$$\xymatrix{
  \BB\times\P^1=\CC\ar[rr]^i\ar[rd]_{pr_\BB}&&\XX\ar[ld]^\Phi\\
  &\BB&}$$
and the distribution $\DD$ on $U\subset\XX$ constructed above.

The miraculous fact is that the distribution $\DD$ has two first
integrals.  Indeed, consider the rational functions on $\XX$ given by
$$j=\frac{-2^{12}3^3A^3}{\Delta}=\frac{6912A^3}{4A^3-27B^2}\quad
\text{and}\quad k=\frac{xA}{B}.$$

\begin{prop}\label{prop:integrals}
  Generically, $\DD$ is tangent to the fibers of $j$ and $k$.  More
  precisely, for all $x\in U$, we have
$$\DD_x\subset T_{j^{-1}(j(x))}\quad\text{and}\quad\DD_x\subset T_{k^{-1}(k(x))}.$$
\end{prop}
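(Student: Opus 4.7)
The plan is to reduce the tangency assertions to two short polynomial identities coming directly from the defining formula for $\tilde A,\tilde B$. For any $(N+1)$-form $\omega'$ at $x$, the associated vector $v_{\omega'}\in T_{\XX,x}$ satisfies $dj\wedge\omega' = dj(v_{\omega'})$ under the chosen identification $\bigwedge^{N+2}T^*_{\XX,x}\cong\C$, so the condition $\DD_x\subset T_{j^{-1}(j(x))}$ is equivalent to $dj\wedge\omega'=0$ at $x$ for every basis section $\omega'$ of $\Phi_*(\Omega^{N+1}_\XX)$, and similarly for $k$. By Proposition~\ref{prop:sections} each such $\omega'$ has the form $ds_1\wedge\cdots\wedge\widehat{ds_i}\wedge\cdots\wedge ds_N\wedge\omega$, where $\omega$ is the 2-form displayed there for the root $\lambda=\lambda_i$. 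Since $j$ and $k$ depend only on $x,t,s_1,\ldots,s_N$ (not on $y$), wedging with $\bigwedge_{\ell\neq i}ds_\ell$ annihilates every $ds_\ell$-term in $dj$ or $dk$ for $\ell\neq i$, so it suffices to verify $dj\wedge\omega=0=dk\wedge\omega$ in the 1-parameter family $\ZZ\to\A^1_s$ of Proposition~\ref{prop:sections}.

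Unwinding the definition of $\tilde A,\tilde B$ at $(t_0,t_1)=(t,1)$ and applying Euler's relation $t_0 A_{t_0}+t_1 A_{t_1}=4d\,A$ (and the analogue for $B$) gives the two identities
\begin{align*}
(t-\lambda)\tilde a &= (\alpha-\beta t)a'+4d\beta a,\\
(t-\lambda)\tilde b &= (\alpha-\beta t)b'+6d\beta b.
\end{align*}
These are the algebraic input for everything that follows; note that they are elementary consequences of the formula in Proposition~\ref{prop:sections}, since $\alpha a'(\lambda)+\beta(4d a(\lambda)-\lambda a'(\lambda))=\alpha A_{t_0}(\lambda,1)+\beta A_{t_1}(\lambda,1)=0$ and similarly for $b$.

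Direct differentiation of $j=6912A^3/(4A^3+27B^2)$ shows that $j_t$ and $j_s$ are proportional, with the same non-vanishing factor, to $3ba'-2ab'$ and $3b\tilde a-2a\tilde b$ respectively. Wedging $dj=j_t\,dt+j_s\,ds$ with $\omega$ produces only $ds\wedge dt\wedge dx$, whose coefficient is proportional to $-(3ba'-2ab')(\alpha-\beta t)+(3b\tilde a-2a\tilde b)(t-\lambda)$; expanding the second term via the identities rewrites it as $(\alpha-\beta t)(3ba'-2ab')+12d\beta ab-12d\beta ab$, so the whole expression vanishes. For $k=xA/B$ one has $dk=(a/b)\,dx+x(a'b-ab')/b^2\,dt+x(\tilde ab-a\tilde b)/b^2\,ds$, and wedging with $\omega$ gives a coefficient proportional to $2d\beta ab-(a'b-ab')(\alpha-\beta t)+(\tilde ab-a\tilde b)(t-\lambda)$; the identities rewrite the last term as $(\alpha-\beta t)(ba'-ab')-2d\beta ab$, and again everything cancels. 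The main obstacle is merely the bookkeeping of signs and numerical coefficients; the pleasant surprise is that the weights $4d$ and $6d$ in the two identities combine in exactly the right ratios ($3\cdot 4d=2\cdot 6d$) to annihilate $dj\wedge\omega$ (with weights $3,2$ from $j$) and, in a slightly different combination, to annihilate $dk\wedge\omega$ as well.
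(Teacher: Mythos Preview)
Your proof is correct and follows essentially the same route as the paper's: both reduce to checking $\omega\wedge dj=\omega\wedge dk=0$ in the 1-parameter family $\ZZ^o$, and your two scalar identities $(t-\lambda)\tilde a=(\alpha-\beta t)a'+4d\beta a$ and $(t-\lambda)\tilde b=(\alpha-\beta t)b'+6d\beta b$ are exactly the $x$-coefficient and constant-term components of the paper's equation~\eqref{eq:lambda-tilde}. The only difference is that you carry out explicitly the ``simple calculation'' the paper leaves to the reader.
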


\begin{proof}
The assertion can be checked by a straightforward calculation.  Again
for notational convenience we work with 1-parameter deformations
in the directions corresponding to roots of $\Gamma$.  Choosing such a
direction and using the notation introduced just before
Proposition~\ref{prop:sections}, it suffices to show that $\omega\wedge
dj=\omega\wedge dk=0$ on $\ZZ^o$.  One computes that
$$dj=\frac{2^{16}3^6a^2b}{\Delta^2}\left((3a'b-2ab')dt+(3\tilde
  ab-2a\tilde b)ds\right)$$
and
$$dk=\frac{adx}b+\frac{x}{b^2}\left((a'b-ab')dt+(\tilde ab-a\tilde
  b)ds\right)$$
where $a'$ and $b'$ are the derivatives of $a$ and $b$ with respect
to $t$ evaluated at $s=0$.  The desired vanishing then falls out from
a simple calculation using the equation~\eqref{eq:lambda-tilde}.
\end{proof}

Note that generically the fibers of $j$ and $k$ are transverse, so
the Proposition gives an alternative characterization of $\DD$.

\begin{remss}
  We proved the Proposition by calculating $\omega$ (and thus $\DD$)
  explicitly in Proposition~\ref{prop:sections}, then computing
  $\omega\wedge dj$ and $\omega\wedge dk$ explicitly.  
  It would be more satisfying to have a conceptual proof.
\end{remss}

\begin{cor}\label{cor:Z}
The image of the composed rational map
$$\CC\overset{i}{\longrightarrow}
\XX\overset{j\times k}{\longrightarrow}\P^1_j\times\P^1_k$$
is a rational curve which meets $\A^1_j\times\A^1_k$ and maps with
positive degree to $\P^1_j$
\end{cor}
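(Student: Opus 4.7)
The plan is to combine the two tangency facts established in Section~\ref{s:distribution}: the image of $i$ is tangent to the distribution $\DD$, and $\DD$ is itself tangent to the fibers of both $j$ and $k$ (Proposition~\ref{prop:integrals}). Together these force the differential of $\phi := (j\times k) \circ i$ to have rank at most one, which bounds the dimension of its image; I would then extract the remaining three assertions (rationality, meeting $\A^1_j \times \A^1_k$, positive degree over $\P^1_j$) by inspecting the restriction of $\phi$ to a general slice $\{b\}\times\P^1$.

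First I would do the rank bound. At any $c \in \CC$ with $i(c) \in U$, the inclusions $\DD_{i(c)} \subset di_c(T_{\CC,c})$ and $\DD_{i(c)} \subset \ker d(j\times k)_{i(c)}$ show that $d\phi_c$ factors through the quotient $di_c(T_{\CC,c})/\DD_{i(c)}$, which has dimension at most $(N+1) - N = 1$. By generic smoothness, $Z := \overline{\phi(\CC)}$ is an irreducible subvariety of $\P^1_j \times \P^1_k$ of dimension at most one.

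Next I would restrict to $\phi_b := \phi|_{\{b\}\times\P^1}$ for a general $b \in \BB$. The first coordinate of $\phi_b$ is the composition of the multisection $\P^1 \to \EE_b$ (which is finite-to-one onto $\P^1_t$ by construction), the fibration $\pi_b : \EE_b \to \P^1_t$, and the $j$-invariant $\P^1_t \to \P^1_j$. The last map is non-constant for a generic $(A,B) \in T_d^o$, since $j = 6912 A^3/\Delta$ is constant only when $A^3/B^2$ is, a non-generic condition. Hence $\phi_b$ is non-constant, its image is an irreducible rational curve contained in the irreducible $Z$, and by dimension it equals $Z$. This shows that $Z$ is in fact one-dimensional and rational, and simultaneously that $Z \to \P^1_j$ has positive degree.

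Finally, to see $Z \cap (\A^1_j \times \A^1_k) \neq \emptyset$, I would note that $j$ takes the value $\infty$ only over the finite set $\{\Delta = 0\}$, that $k = xA/B$ becomes infinite only over $\{B = 0\}$ or on the zero section, and that for general $b$ the multisection $i(\{b\}\times\P^1)$ is neither the zero section nor contained in these finitely many fibers. Therefore it meets the dense open of $\EE_b$ where $j$ and $k$ are both finite, and $\phi_b$ maps such points into $\A^1_j \times \A^1_k$. The essential step is the first one --- turning the pointwise tangency to $\DD$ into a global dimension bound on the image of the rational map $\phi$; once the rank bound is in hand, the other assertions follow routinely from inspecting the slice $\phi_b$.
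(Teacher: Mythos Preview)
Your proof is correct and follows essentially the same approach as the paper's: bound the dimension of the image via the tangency of $i(\CC)$ and the fibers of $j\times k$ to $\DD$, then restrict to a slice $\{b\}\times\P^1$ to extract rationality, positive degree over $\P^1_j$, and the intersection with $\A^1_j\times\A^1_k$. The only cosmetic difference is that you phrase the first step as a rank bound on $d\phi$ whereas the paper phrases it as a lower bound on fiber dimension; these are dual formulations of the same observation.
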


\begin{proof}
  Indeed, Proposition~\ref{prop:integrals} shows that the fibers of
  $(j\times k)\circ i$ have dimension at least $N$, so its image is a
  curve or a point.  It cannot be a point because for any $b\in B$,
  the image of $\{b\}\times\P^1$ in $\EE_b$ is a rational multisection
  which is not the zero section, and the image of this curve in
  $\P^1_j\times\P^1_k$ maps surjectively to the factor $\P^1_j$.  It
  is thus a curve which maps with positive degree to $\P_j^1$, and it is
  clearly rational.  Moreover, the subset of $\EE_b$ where $k=\infty$
  is the zero section, so the image of $(j\times k)\circ i$ meets the
  finite part of $\P^1_j\times\P^1_k$.
\end{proof}

\begin{proof}[Proof of Theorem~\ref{thm:main-surface}]
  We are now ready to deduce a contradiction.  Let $F$ be the $\P^1$
  bundle over $\P^1$ given by
$$F=\P\left(\OO_{\P^1}(-2d)\oplus\OO_{\P^1}\right)\to\P^1_t.$$
We write $x$ for the ``vertical'' coordinate on $F$ over $\A^1_t$, so
that $F$ is a compactification of $\A^2_{t,x}$.  For each $b\in \BB$
corresponding to $(A,B)$, the elliptic surface $\EE_b$ is the double
cover of $F$ branched along $x=\infty$ and $x^3+Ax+B=0$.

All this fits together into the diagram
$$\xymatrix{\XX\ar[r]^\Upsilon\ar[d]
  &\BB\times F\ar[d]\ar@{-->}[r]^\Psi&\P^1_j\times\P^1_k\ar[d]\\
\BB\times\P^1_t\ar@{=}[r]\ar[d]&\BB\times\P^1_t\ar[r]\ar[d]&\P^1_j\\
\BB\ar@{=}[r]&\BB}$$
whose fiber over $b\in\BB$ is
$$\xymatrix{\EE_b\ar[r]^{\Upsilon_b}\ar[d]
  &F\ar[d]\ar@{-->}[r]^{\Psi_b}&\P^1_j\times\P^1_k\ar[d]\\
\P^1_t\ar@{=}[r]&\P^1_t\ar[r]&\P^1_j.}$$

Let $Z\subset\P^1_j\times\P^1_k$ be the rational curve of
Corollary~\ref{cor:Z}.  If Theorem~\ref{thm:main-surface} is false,
then for every $b\in\BB$, the inverse image
$\Upsilon_b^{-1}\left(\Psi^{-1}_b\left(Z\right)\right)$ contains a
component which is a rational curve (namely, the image of
$\{b\}\times\P^1$ under $i$).  Given that we have so much flexibility
in choosing the branching data in the diagram above, this is too much
to hope for and will lead to a contradiction.  In fact, we will show
that for generic $b$, every component of the curve
$\Upsilon_b^{-1}\left(\Psi^{-1}_b\left(Z\right)\right)$ has
positive geometric genus.

To see this, let us fix a $b\in\BB$ corresponding to
$(A,B)$ and consider the diagram
$$\xymatrix{F\ar[d]\ar@{-->}[r]^{\Psi_b}&\P^1_j\times\P^1_k\ar[d]\\
  \P^1_t\ar[r]&\P^1_j.}$$
The map $\P^1_t\to\P^1_j$ sends $t$ to
$j=6912A(t)^3/(4A(t)^3+27B(t)^2)$.  For a generic $b$ this has degree
$12d$ and is branched in triples over $j=0$, in pairs over $j=1728$,
and has a single ramification point with index $e=2$ over $10d-2$
other values of $j$.  As $b$ varies, these $10d-2$ points fill out an
open set in the $(10d-2)^{th}$ symmetric power of $\P^1$.  (See \cite[3.1
and 3.3]{CoxDonagi86}.)  The diagram is commutative, and we have
$\Psi_b^*(k)=xA(t)/B(t)$.  One sees easily that $\Psi_b$ is finite
away from the zeroes of $A$ and $B$, it is undefined at the points
$\{x=0,B=0\}$ and $\{x=\infty,A=0\}$, it collapses the other points on
$A=0$ to $(j,k)=(0,0)$, and it collapses the other points on $B=0$ to
$(j,k)=(1728,\infty)$.  
For all $b$, the branch curve
$\{x=\infty\}\cup\{x^3+Ax+B=0\}$ in $F$ maps to the curve 
$$\{k=\infty\}\cup\{(6912-4j)k^3+27j(k+1)=0\}
\subset\P^1_j\times\P^1_k.$$ Write $Y$ for the component
$\{(6912-4j)k^3+27j(k+1)=0\}$ above, and note that $Y$ is a rational
curve projecting isomorphically onto $\P^1_k$.

We now consider two cases depending on whether the degree of
$Z\to\P^1_j$ is 1 or $>1$.  If the degree is 1, then $Z$ meets the
curve $Y$ transversely at exactly one point.  Moreover,
$\Psi^{-1}_b(Z)$ is an irreducible (rational) curve, as it projects to
$\P^1_t$ with degree 1.  Also, it meets the branch locus $x^3+Ax+B$
transversely in at least $4d$, $6d$, or $12d$ points; the three cases
correspond to the intersection of $Z$ and $Y$ being at $(j,k)=(0,0)$,
$(1728,\infty)$ or elsewhere.  It follows that
$\Upsilon_b^{-1}(\Psi_b^{-1}(Z))$ is an irreducible curve of positive
geometric genus.  This is a contradiction.

Now consider the case where the degree of $Z\to\P^1_j$ is $>1$.  We
claim that for generic $b$, $\Psi^{-1}_b(Z)$ is irreducible of
positive geometric genus, and therefore
$\Upsilon_b^{-1}(\Psi^{-1}_b(Z))$ has no rational components, again a
contradiction.  To see this, we note that
$\Upsilon_b^{-1}(\Psi^{-1}_b(Z))$ is birational to the fiber product
of $Z\to\P^1_j$ and $\P^1_t\to\P^1_j$.  First, we show that this fiber
product is irreducible.  As $b$ varies, most of the branch locus of
$F\ratto\P^1_j\times\P^1_k$ varies.  More precisely, $j=0$ and
$j=1728$ are always in the branch locus of $\P^1_t\to\P^1_j$, but the
other $10d-2$ branch points vary.  It follows that for most $b$, the
branch locus of $Z\to\P^1_j$ and that of $\P^1_t\to\P^1_j$ have at
most two points in common.  If they have none in common, it is
immediate that the fiber product is irreducible.  If they have one in
common, then \cite[Prop.~3.1]{Pakovich11} shows the fiber product is
irreducible.  If they have two points in common,
\cite[Prop.~3.3]{Pakovich11} shows that the fiber product is
irreducible unless $f:Z\to\P^1_j$ and $g_b:\P^1_t\to\P^1_j$ are
related as follows: There are rational functions $f_1$ and $g_{1,b}$
and a $d>1$ such that $f=\mu\circ z^d\circ f_1$ and $g_b=\mu\circ
z^d\circ g_{1,b}$ where $\mu$ is a Mobius transformation.  It is clear
that for generic $b$, $g_b$ does not have this form, so the fiber
product is irreducible.  Finally, we let $W$ be the normalization of
the fiber product $\P^1_t\times_{\P^1_j}Z$ and use the Riemann-Hurwitz
formula for $W\to Z$ to show that the geometric genus of $W$ is
positive.  Since the degree is $12d$, we must show that the
ramification term has degree $>24d-2$.  We may choose $b$ so that
$10d-2$ of the branch points of $\P^1_t\to\P^1_j$ are distinct from
any critical values of $Z\to\P^1_j$.  If $\delta$ is the degree of
$Z\to\P^1_j$, we get a contribution of $\delta(10d-2)$ to the
ramification term.  If $\delta>2$ we are done.  If $\delta=2$, then
points of $W$ over the triple ramification points of $\P^1_t\to\P^1_j$
contribute at least $2\cdot4d$ more, so again we have positive
geometric genus.  Thus in all cases, $W$ is an irreducible curve of
positive geometric genus and we again reach a contradiction.

This completes the proof of the theorem.
\end{proof}

\begin{rem}
  The results of \cite{Pakovich11} used above also allow one to show
  that for generic $b$ and $b'$, with corresponding maps
  $\P^1_t\to\P^1_j$ and $\P^1_{t'}\to\P^1_j$, the fiber product
  $\P^1_t\times_{\P^1_j}\P^1_{t'}$ is irreducible.  On the other hand,
  given some other proof of this fact, we could deduce the
  irreducibility of $W$ above by a ``linear disjointness'' argument.
  In other words, if we know that for general $b$ and $b'$, $\C(t)$
  and $\C(t')$ are linearly disjoint extensions of $\C(j)$ (by which
  we mean that $\C(t)\tensor_{\C(j)}\C(t')$ is a field), then we may
  deduce that for general $b$, $\C(t)$ and $\C(Z)$ are linearly
  disjoint, and this is equivalent to $W$ being irreducible.
\end{rem}

\begin{rem}
  As written, the proof of Theorem~\ref{thm:main-surface} relies on
  transcendental arguments via the results of \cite{Pakovich11}.
  Nevertheless, I believe that it also holds in characteristic $p>0$.
  This would not contradict the results in characteristic $p$
  mentioned in the introduction.  Indeed, all the constructions of
  infinitely many rational curves I know of depend crucially on
  working over a \emph{finite} field, i.e., they do not generalize to
  positive dimensional families.  Thus they may all be removed by
  removing countably many proper closed subvarieties in moduli.
\end{rem}

\section{Further speculation on rational curves}
A famous conjecture of Lang in \cite{Lang86}  
asserts that there should be only finitely
many rational curves on a surface of general type.  Since these curves
do not move, it is equivalent to conjecture that their degrees are
bounded, where we measure degree with the canonical bundle (i.e.,
$\deg(C)=C.K$).  

It is certainly false that there are in general only finitely many
rational curves on an elliptic surface.  Indeed, as soon as we have
one non-torsion rational multisection, we can make infinitely many
more by using the group structure.  However, the heuristics in
Section~\ref{s:heuristics} suggest that the degree over the base of a
rational curve on an elliptic surface $\EE$ of Kodaira dimension 1
should be bounded.  This is equivalent to asserting that the integer
$C.K$ should be bounded as $C$ runs through rational curves on $\EE$.

If $\SS$ is a surface of Kodaira dimension $\le0$, then it is
immediate that $C.K\le 0$ for all rational curves on $\SS$.

Combining these observations, we make the following speculation.

\begin{conj}
  Let $\SS$ be a smooth projective surface over the complex numbers
  with canonical bundle $K$.  Then the set of integers
$$\left\{K.C\,\left|\,C\subset\SS\text{ a rational
      curve}\right.\right\}$$
is bounded above.
\end{conj}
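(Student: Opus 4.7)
The plan is to proceed by case analysis on the Kodaira dimension $\kappa(\SS)\in\{-\infty,0,1,2\}$, using the Enriques--Kodaira classification to reduce each case to a structural model. The difficulty is sharply concentrated in the last case.

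I would first dispose of $\kappa(\SS)\le 0$. The minimal model $\SS_{\min}$ either has $K$ numerically trivial (when $\kappa=0$: abelian, K3, Enriques, or bielliptic) or is $\P^2$ or a Hirzebruch surface $F_n$ (when $\kappa=-\infty$). In the first situation $K_{\SS_{\min}}.C=0$ for every curve, and when $\SS\to\SS_{\min}$ is a sequence of blow-ups the difference $K_{\SS}-\pi^{*}K_{\SS_{\min}}$ is a fixed effective sum of exceptional $(-1)$-curves, contributing a bounded amount to $K.C$ for any irreducible $C$. In the second case adjunction gives the bound directly: on $\P^2$ one has $K.C=-3\deg(C)\le -3$, while on $F_n$ a direct classification of irreducible curves yields $K.C\le n-2$, with equality at the negative section.

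For $\kappa(\SS)=1$ a minimal model is a properly elliptic fibration $\pi:\SS_{\min}\to B$, and $K_{\SS_{\min}}$ is numerically a positive rational multiple of the fiber class $F$ up to an effective fiber-supported contribution from multiple fibers. Hence for an irreducible multisection $C$ the quantity $K.C$ is, up to an explicit bounded correction, proportional to $e=C.F$, the degree of $C$ over $B$. The conjecture reduces to bounding $e$ among rational multisections on a fixed $\SS$. Theorem~\ref{thm:main-surface} of this paper realizes this bound on a very general $\SS$ (only the zero section, so $e=1$); a general proof would require a quantitative refinement of the heuristic of Section~\ref{s:heuristics}, asserting that the locus in moduli supporting a rational multisection of class $eO+fF$ has codimension growing with $e$, so that only finitely many values of $e$ can occur on any given surface.

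The case $\kappa(\SS)=2$ is the main obstacle and is essentially a weak form of Lang's conjecture on rational curves on surfaces of general type. On a minimal such surface $K$ is big and nef, so an upper bound on $K.C$ is equivalent to bounding the $K$-degree of rational curves; combined with Hilbert-scheme boundedness of curves of given $K$-degree and the rigidity of rational curves on general-type surfaces, this would promote to outright finiteness. As Lang's conjecture is open, this is where any proof must do its real work. A speculative avenue is to extend the distribution-and-first-integral machinery of Section~\ref{s:distribution} to moduli of general-type surfaces, seeking Hodge-theoretic obstructions to the simultaneous existence of high-$K$-degree rational curves in a family.
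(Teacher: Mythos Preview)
This statement is a \emph{conjecture}; the paper offers no proof and explicitly labels it as speculation. There is thus no proof in the paper to compare your argument against. Your proposal is not a proof either, and you effectively acknowledge this: for $\kappa(\SS)=1$ you say a proof ``would require a quantitative refinement of the heuristic of Section~\ref{s:heuristics},'' and for $\kappa(\SS)=2$ you correctly identify the statement as a weak form of Lang's conjecture, which is open. This is exactly the paper's own framing.

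Your treatment of $\kappa(\SS)\le0$, however, contains a genuine gap. Writing $K_\SS=\pi^*K_{\SS_{\min}}+E$ with $E$ the exceptional divisor, you assert that $E$ contributes ``a bounded amount to $K.C$ for any irreducible $C$.'' But for $C$ not contained in $E$, the number $E.C$ is the sum of multiplicities of $\pi_*C$ at the blown-up centers, and nothing bounds this a priori. For instance, blow up a K3 surface at the node of a nodal rational curve $C_0$; the strict transform $C$ is rational with $K_\SS.C=E.C=2$, and rational curves with higher-multiplicity singular points at the chosen center would give larger values. The paper's own motivating remark that $K.C\le0$ whenever $\kappa\le0$ is likewise imprecise on non-minimal surfaces. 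So even the ``easy'' case is not as immediate as either presentation suggests, though of course neither you nor the paper claims a proof of the conjecture.
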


This conjecture seems to the author to be of topological nature.
Indeed, if we restrict to smooth rational curves, then it is a
consequence of the Bogomolov-Miyaoka-Yau inequality.  On the other
hand, the results of \cite{Ulmer14a} show that it is false in
characteristic $p$ for any prime $p>0$.

Applied to the case of an elliptic surface with section and the
associated elliptic curve, the conjecture would imply the following
statement: Suppose that $E$ is an elliptic curve over $K_0=\C(t)$ with
$j(E)\not\in\C$ and suppose that $K_0\subset K_1\subset
K_2\subset\cdots$ is a sequence of finite extensions of $K_0$ such
that each $K_i$ is a rational function field.  Then the rank of
$E(K_i)$ is bounded independently of $i$.  The special case where
$K_i=\C(t^{1/i})$ is labelled a ``conjecture(?)'' in
\cite{Silverman00}.  The heuristics above suggest that it may hold in
much greater generality.

\bibliography{database}{}
\bibliographystyle{alpha}

\end{document}